   \theoremstyle{plain}
   \newtheorem{thm}{Theorem}
   \newtheorem{lem}[thm]{Lemma}
   \theoremstyle{definition}
   \newtheorem{defn}[thm]{Definition}
   \theoremstyle{remark}
 \title[Vector bundles from generalized pairs of cocycles]%
{Vector bundles from generalized pairs of cocycles}
\author{V. Manuilov, Chao You}
\address{Department of Mechanics and Mathematics, Moscow State
University, Leninskie Gory, Moscow, 119991, Russia}
\address{Harbin Institute of Technology, 92 West DaZhi str., Harbin, 150001 P. R. China \\ \vspace{0.5cm}}
\email{manuilov@mech.math.msu.su}
\email{youchao@hit.edu.cn}
\thanks{The first named author acknowledges partial support from
RFFI, grant No. 13-01-00232. 
}
\begin{document}

\keywords{almost
representation, vector bundle}

\subjclass[2000]{Primary 55R50; Secondary 46L80}

 \begin{abstract}
It is interesting to know, how far we can generalize the notion of a group-valued cocycle keeping the property to determine a bundle. We find a generalization for pairs of cocycles and show how these generalized pairs of cocycles can still determine vector bundles.

 \end{abstract}

\maketitle

\subsection{Introduction}

It is a standard triviality that one can use a group-valued cocycle to glue up locally defined trivial bundles into a bundle. It is also known that one can use almost cocycles in this construction. It is interesting to know, how far we can generalize the notion of a cocycle keeping the property to determine a bundle. It was shown recently in \cite{Man} that $K$-theory elements can be represented not necessarily by pairs of projections but by pairs satisfying weaker properties. We follow this to find a generalization for pairs of cocycles and to show how generalized pairs of cocycles can still determine vector bundles. 

\subsection{Vector bundles from generalized pairs of cocycles}

Let $X$ be a compact Hausdorff space, let $\{U_\alpha\}_{\alpha\in\Lambda}$ be a finite open covering of $X$ and let $\mathrm U(N)$ denote the unitary group of an $N$-dimensional Hermitian space. Let 
$$
g_{\alpha\beta}^{\pm}:U_\alpha\cap U_\beta\to \mathrm U(N)
$$ 
be maps satisfying $g^{\pm}_{\alpha\beta}=(g^{\pm}_{\beta\alpha})^{-1}$. Fix $\varepsilon>0$.

\begin{defn}
The maps $g^{+}_{\alpha\beta}$ and $g^{-}_{\alpha\beta}$ are an {\it $\varepsilon$-generalized pair of cocycles} if  
\begin{equation}\label{1}
\|(g^+_{\alpha\beta}(x)g^+_{\beta\gamma}(x)-g^+_{\alpha\gamma}(x))(g^+_{\gamma\delta}(x)-g^-_{\gamma\delta}(x))\|<\varepsilon;
\quad
\|(g^-_{\alpha\beta}(x)g^-_{\beta\gamma}(x)-g^-_{\alpha\gamma}(x))(g^+_{\gamma\delta}(x)-g^-_{\gamma\delta}(x))\|<\varepsilon
\end{equation}
for all $\alpha,\beta,\gamma,\delta\in\Lambda$ and for every $x\in X$, for which the maps involved are defined.

\end{defn}

Note that (\ref{1}) are symmetric: if they hold for all $\alpha,\beta,\gamma,\delta\in\Lambda$ then, by passing to the adjoints, one obtains 
$$
\|(g^+_{\delta\alpha}(x)-g^-_{\delta\alpha}(x))(g^+_{\alpha\beta}(x)g^+_{\beta\gamma}(x)-g^+_{\alpha\gamma}(x))\|<\varepsilon;
\quad
\|(g^+_{\delta\alpha}(x)-g^-_{\delta\alpha}(x))(g^-_{\alpha\beta}(x)g^-_{\beta\gamma}(x)-g^-_{\alpha\gamma}(x))\|<\varepsilon
$$
for all $x\in X$, for which the maps involved are defined.

Let $\{\varphi_\alpha^2\}_{\alpha\in\Lambda}$ be a partition of the unity subordinated to the given covering, $0\leq\varphi_\alpha(x)\leq 1$, $\operatorname{supp}\varphi_\alpha\subset U_\alpha$, $\sum_\beta\varphi^2_\beta(x)=1$ for all $x\in X$ and for all $\alpha\in\Lambda$. Recall that if $\{g_{\alpha\beta}\}$ is a cocycle, i.e. if $g_{\alpha\beta}g_{\beta\gamma}=g_{\alpha\gamma}$ then the formula 
$$
P=[P_{\alpha\beta}]_{\alpha,\beta\in\Lambda}, \qquad P_{\alpha\beta}(x)=\varphi_\alpha(x)\varphi_\beta(x)g_{\alpha\beta}(x)
$$
is known to determine the vector bundle over $X$. More precisely, $P$ is well-defined (even at the points, where the cocycle is not) and is a projection in a trivial vector bundle over $X$ with the image being the vector bundle defined by the cocycle $\{g_{\alpha\beta}\}$. We can use $\varepsilon$-generalized pairs of cocycles to form the same matrix-valued functions, but without hope that these functions would be projection-valued. Set 
$$
A_+=[(A_+)_{\alpha\beta}]_{\alpha,\beta\in\Lambda},\qquad (A_+)_{\alpha\beta}(x)=\varphi_\alpha(x)\varphi_\beta(x)g^+_{\alpha\beta}(x)
$$ 
and
$$
A_-=[(A_-)_{\alpha\beta}]_{\alpha,\beta\in\Lambda},\qquad (A_-)_{\alpha\beta}(x)=\varphi_\alpha(x)\varphi_\beta(x)g^-_{\alpha\beta}(x).
$$ 

\begin{lem}\label{lemma2}
If $g^\pm_{\alpha\beta}$ are an $\varepsilon$-generalized pair of cocycles then $A_+$ and $A_-$ are selfadjoint and
$$
\|(A_+-A_+^2)(A_+-A_-)\|<m\varepsilon,\quad \|(A_--A_-^2)(A_+-A_-)\|<m\varepsilon,
$$
where $m=|\Lambda|$.

\end{lem}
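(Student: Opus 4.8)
The plan is to compute the relevant block entries explicitly and then reduce the operator-norm estimate to the pointwise bounds in (\ref{1}) together with a block-matrix norm inequality.

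First I would settle selfadjointness. The $(\alpha,\beta)$-block of $A_+^*$ is $((A_+)_{\beta\alpha})^*=\varphi_\alpha\varphi_\beta (g^+_{\beta\alpha})^*$, and since $g^+_{\beta\alpha}$ is unitary-valued we have $(g^+_{\beta\alpha})^*=(g^+_{\beta\alpha})^{-1}=g^+_{\alpha\beta}$ by the standing relation $g^+_{\alpha\beta}=(g^+_{\beta\alpha})^{-1}$; hence $(A_+^*)_{\alpha\beta}=(A_+)_{\alpha\beta}$ and $A_+=A_+^*$. The same argument applies to $A_-$. Next I would record the identity that converts the failure of $A_+$ to be idempotent into the plus-cocycle defect. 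Multiplying out $A_+^2$ block-wise and using $\sum_\beta\varphi_\beta^2=1$ to write $g^+_{\alpha\gamma}=\sum_\beta\varphi_\beta^2 g^+_{\alpha\gamma}$, I obtain
\[
(A_+-A_+^2)_{\alpha\gamma}=-\varphi_\alpha\varphi_\gamma\sum_\beta\varphi_\beta^2\bigl(g^+_{\alpha\beta}g^+_{\beta\gamma}-g^+_{\alpha\gamma}\bigr),
\]
so the blocks of $A_+-A_+^2$ are weighted averages of the plus-cocycle defects.

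Then I would form the product with $A_+-A_-$, whose $(\gamma,\delta)$-block is $\varphi_\gamma\varphi_\delta(g^+_{\gamma\delta}-g^-_{\gamma\delta})$. Carrying out the block multiplication gives
\[
\bigl[(A_+-A_+^2)(A_+-A_-)\bigr]_{\alpha\delta}=-\varphi_\alpha\varphi_\delta\sum_{\beta,\gamma}\varphi_\beta^2\varphi_\gamma^2\bigl(g^+_{\alpha\beta}g^+_{\beta\gamma}-g^+_{\alpha\gamma}\bigr)\bigl(g^+_{\gamma\delta}-g^-_{\gamma\delta}\bigr).
\]
Each factor $(g^+_{\alpha\beta}g^+_{\beta\gamma}-g^+_{\alpha\gamma})(g^+_{\gamma\delta}-g^-_{\gamma\delta})$ is precisely the quantity bounded by $\varepsilon$ in the first inequality of (\ref{1}), so, using $\sum_\beta\varphi_\beta^2=\sum_\gamma\varphi_\gamma^2=1$ and $\varphi_\alpha,\varphi_\delta\le 1$, every block satisfies $\|[(A_+-A_+^2)(A_+-A_-)]_{\alpha\delta}\|<\varepsilon$.

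Finally I would pass from the block bound to the norm of the whole $|\Lambda|\times|\Lambda|$ block matrix $M$. The step I expect to be the main point is the estimate $\|M\|\le\|\widehat M\|$, where $\widehat M=[\,\|M_{\alpha\delta}\|\,]$ is the scalar matrix of block norms; this follows from $|\langle\eta,M\xi\rangle|\le\sum_{\alpha,\delta}\|\eta_\alpha\|\,\|M_{\alpha\delta}\|\,\|\xi_\delta\|$ applied to the unit vectors $(\|\eta_\alpha\|)_\alpha$ and $(\|\xi_\delta\|)_\delta$. Writing $\varepsilon'=\max_{\alpha,\delta}\|M_{\alpha\delta}\|<\varepsilon$ and using monotonicity of the norm on entrywise-nonnegative matrices, $\widehat M\le\varepsilon' J$ entrywise (with $J$ the all-ones $m\times m$ matrix) gives $\|\widehat M\|\le\|\varepsilon' J\|=m\varepsilon'<m\varepsilon$, so $\|(A_+-A_+^2)(A_+-A_-)\|<m\varepsilon$. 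The bound for $A_-$ is obtained by the identical computation, now invoking the second inequality of (\ref{1}) for the minus-cocycle defects. I note that retaining the factor $\varphi_\alpha\varphi_\delta$ and using $\sum_\alpha\varphi_\alpha^2=1$ would in fact yield the sharper constant $\varepsilon$ in place of $m\varepsilon$.
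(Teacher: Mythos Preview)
Your argument is correct and follows the same route as the paper: compute the $(\alpha,\delta)$-block of $(A_\pm-A_\pm^2)(A_+-A_-)$, bound it by $\varepsilon$ via (\ref{1}) and the partition-of-unity identity, then pass to the full matrix norm via the $m\times m$ block structure. Your treatment of the block-norm step (through $\|M\|\le\|\widehat M\|$ and comparison with the all-ones matrix) is more explicit than the paper's one-line assertion, and your final remark that keeping the factor $\varphi_\alpha\varphi_\delta$ yields the rank-one majorant $\varepsilon\,vv^T$ with $\|v\|^2=1$, hence the sharper bound $\varepsilon$, is a valid improvement not noted in the paper.
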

\begin{proof}
Selfadjointness is obvious. The norm of an $(\alpha,\delta)$-entry of the matrix $(A_\pm-A_\pm^2)(A_+-A_-)$ can be estimated as follows: 
\begin{eqnarray*}
&&\!\!\!\!\!\|\varphi_\alpha(x)\varphi_\delta(x)\sum_\gamma\varphi_\gamma^2(x)\Bigl(g^\pm_{\alpha\gamma}(x)-\sum_\beta\varphi^2_\beta(x) g^\pm_{\alpha\beta}(x)g^\pm_{\beta\gamma}(x)\Bigr)(g^+_{\gamma\delta}(x)-g^-_{\gamma\delta}(x))\|\\
&=&\|\varphi_\alpha(x)\varphi_\delta(x)\sum_\gamma\varphi_\gamma^2(x)\sum_\beta\varphi^2_\beta(x)\Bigl(g^\pm_{\alpha\gamma}(x)-g^\pm_{\alpha\beta}(x)g^\pm_{\beta\gamma}(x)\Bigr)(g^+_{\gamma\delta}(x)-g^-_{\gamma\delta}(x))\|\\
&<&|\varphi_\alpha(x)\varphi_\delta(x)\sum_\gamma\varphi_\gamma^2(x)\sum_\beta\varphi^2_\beta(x)|\cdot\varepsilon\leq\varepsilon.
\end{eqnarray*}
Then the norm of the $m$-dimensional matrix with entries of the norm less than $\varepsilon$ is less than $m\varepsilon$. 

\end{proof}

\begin{lem}
$\|A_+\|\leq m$, $\|A_-\|\leq m$.

\end{lem}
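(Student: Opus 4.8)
The plan is to reduce the claim to a single pointwise estimate and then to bound a block matrix by its entries, exactly in the spirit of the preceding lemma. Recall that $A_+$ is a continuous section of the endomorphism bundle of the trivial bundle $X\times\mathbb{C}^{mN}$, where $\mathbb{C}^{mN}=\bigoplus_{\alpha\in\Lambda}\mathbb{C}^N$, and that its norm is $\|A_+\|=\sup_{x\in X}\|A_+(x)\|$. So it suffices to fix $x\in X$ and bound the operator norm of the $m\times m$ block matrix $A_+(x)=[\varphi_\alpha(x)\varphi_\beta(x)g^+_{\alpha\beta}(x)]$. First I would note that, thanks to the factors $\varphi_\alpha\varphi_\beta$, each entry is well-defined at every $x$ (the value $g^+_{\alpha\beta}(x)$ only matters where both $\varphi_\alpha(x)$ and $\varphi_\beta(x)$ are nonzero, i.e. on $U_\alpha\cap U_\beta$), so $A_+(x)$ is a genuine matrix for all $x$.

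For the crude bound I would argue entrywise. Since $g^+_{\alpha\beta}(x)\in\mathrm U(N)$ is unitary and $0\le\varphi_\alpha(x),\varphi_\beta(x)\le1$, the norm of the $(\alpha,\beta)$-entry satisfies
$$
\|\varphi_\alpha(x)\varphi_\beta(x)g^+_{\alpha\beta}(x)\|=\varphi_\alpha(x)\varphi_\beta(x)\le1.
$$
An $m\times m$ block matrix all of whose entries have norm at most $1$ has operator norm at most $m$ --- this is precisely the estimate already invoked at the end of the proof of Lemma~\ref{lemma2} (with $\varepsilon$ replaced by $1$). Hence $\|A_+(x)\|\le m$ for every $x$, and taking the supremum over $x$ gives $\|A_+\|\le m$. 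The argument for $A_-$ is identical. I do not expect any real obstacle here: the only point to keep in mind is the well-definedness remark above.

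Finally, I would observe that this bound is far from sharp, and that $\|A_\pm\|\le1$ can be obtained with only a little more work using $\sum_\beta\varphi_\beta^2=1$. Indeed, for $\xi=(\xi_\beta)_\beta\in\bigoplus_\beta\mathbb{C}^N$, unitarity of $g^+_{\alpha\beta}(x)$, the triangle inequality, the Cauchy--Schwarz inequality and $\sum_\beta\varphi_\beta(x)^2=1$ give
$$
\|(A_+(x)\xi)_\alpha\|=\varphi_\alpha(x)\Bigl\|\sum_\beta\varphi_\beta(x)g^+_{\alpha\beta}(x)\xi_\beta\Bigr\|\le\varphi_\alpha(x)\sum_\beta\varphi_\beta(x)\|\xi_\beta\|\le\varphi_\alpha(x)\|\xi\|,
$$
so that $\|A_+(x)\xi\|^2=\sum_\alpha\|(A_+(x)\xi)_\alpha\|^2\le\sum_\alpha\varphi_\alpha(x)^2\|\xi\|^2=\|\xi\|^2$. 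Since only the bound $\|A_\pm\|\le m$ is needed in the sequel, I would state the lemma as given and prove it by the crude entrywise estimate, leaving the sharper inequality as a remark.
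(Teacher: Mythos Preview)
Your argument is correct and is exactly the paper's approach: the paper's proof is the single sentence ``Each entry of these two matrix-valued functions is a contraction,'' i.e.\ the same entrywise bound followed by the $m\times m$ block-matrix estimate you spell out. Your additional remark that in fact $\|A_\pm\|\le 1$ is a nice bonus beyond what the paper records.
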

\begin{proof}
Each entry of these two matrix-valued functions is a contraction.

\end{proof}

Set $g(t)=t-t^2$, $h(t)=tg(t)$.

\begin{lem}\label{L4}
$\|g(A_+)-g(A_-)\|<2(m+3)\sqrt{m\varepsilon}$. 

\end{lem}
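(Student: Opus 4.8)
The plan is to use selfadjointness to reduce everything to a single norm estimate. Writing $S=g(A_+)-g(A_-)$ and $D=A_+-A_-$, the operator $S$ is selfadjoint, so $\|S\|=\|S^2\|^{1/2}$ and it suffices to bound $\|S^2\|$ by a quantity of order $m^2\varepsilon$. The feature that makes this delicate is that $g(A_+)$ and $g(A_-)$ are individually only \emph{bounded} — from $\|A_\pm\|\le m$ one only gets $\|g(A_\pm)\|\le m^2+m$ — and never small; the sole small quantities at our disposal are their products with $D$. Indeed Lemma~\ref{lemma2} reads $\|g(A_+)D\|<m\varepsilon$ and $\|g(A_-)D\|<m\varepsilon$, and, taking adjoints, $\|Dg(A_+)\|,\|Dg(A_-)\|<m\varepsilon$; hence $\|SD\|\le\|g(A_+)D\|+\|g(A_-)D\|<2m\varepsilon$ and similarly $\|DS\|<2m\varepsilon$.

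First I would record the identity $S=(\id-A_+)D-DA_-$, which follows from $g(A_\pm)=A_\pm-A_\pm^2$ together with $A_+^2-A_-^2=A_+D+DA_-$. Its virtue is that it displays $S$ with an explicit factor $D$ on the right, so that upon expanding $S^2=S\cdot S$ and substituting this form into the right factor one liberates a copy of $D$ to pair with the small quantities above: $S^2=S(\id-A_+)D-SDA_-$. Here $\|SDA_-\|\le\|SD\|\,\|A_-\|<2m^2\varepsilon$, and $S(\id-A_+)D=SD-SA_+D$, whose first summand is controlled by $\|SD\|$.

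The one term resisting this is $SA_+D$, and it is the heart of the matter: a naive expansion of $(g(A_+)-g(A_-))^2$ carries no factor $D$ whatsoever and merely reproduces $S^2$, so a genuinely new relation is needed to move $A_+$ past $S$. That relation is the commutation identity
\[
SA_+-A_-S=D\,g(A_+)-g(A_-)\,D,
\]
verified directly, both sides reducing to $h(A_+)+h(A_-)-A_-g(A_+)-g(A_-)A_+$ via $g(A_\pm)A_\pm=A_\pm g(A_\pm)=h(A_\pm)$; by the estimates above its right-hand side has norm $<2m\varepsilon$. Replacing $SA_+$ by $A_-S$ turns $SA_+D$ into $A_-SD+Dg(A_+)D-g(A_-)D^2$, and now every summand is a product of a factor of norm $\le m$ or $\le 2m$ with one of the small quantities $SD$, $g(A_+)D$, $g(A_-)D$ (using $\|D\|\le 2m$). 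Collecting all contributions yields $\|S^2\|<2m\varepsilon+8m^2\varepsilon$, whence $\|S\|=\|S^2\|^{1/2}<2(m+3)\sqrt{m\varepsilon}$, in fact with room to spare. I expect the commutation identity, and the bookkeeping of which factor carries the $D$, to be the only subtle points; everything else is the triangle inequality together with $\|A_\pm\|\le m$.
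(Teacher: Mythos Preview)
Your argument is correct, and genuinely different from the paper's. The paper works pointwise: at each $x\in X$ it diagonalizes $D=A_+-A_-$, splits the space as $H_1\oplus H_2$ according to whether the eigenvalues of $D$ are below or above the threshold $\sqrt{m\varepsilon}$, and then uses $\|g(A_\pm)D\|<m\varepsilon$ to show that the $(1,2)$, $(2,1)$ and $(2,2)$ blocks of $g(A_\pm)$ are small, while the difference of $(1,1)$ blocks is controlled directly via the identity $S=D-D^2-A_-D-DA_-$ and $\|a_1\|<\sqrt{m\varepsilon}$. Your route bypasses the spectral splitting entirely: the $C^*$-identity $\|S\|^2=\|S^2\|$ reduces to bounding $S^2$, and your factorization $S=(\id-A_+)D-DA_-$ together with the commutation relation $SA_+-A_-S=Dg(A_+)-g(A_-)D$ let you pair every term with a small factor $g(A_\pm)D$ or $SD$. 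What you gain is a proof that does not need to diagonalize anything and hence works verbatim in an arbitrary $C^*$-algebra, not only in $C(X,M_N)$; what the paper's argument buys is a transparent spectral picture explaining why the threshold $\sqrt{m\varepsilon}$ appears. Your numerical bookkeeping is fine: $\|S^2\|<2m\varepsilon+8m^2\varepsilon$ gives $\|S\|<\sqrt{(8m+2)m\varepsilon}$, comfortably below $2(m+3)\sqrt{m\varepsilon}$.
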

\begin{proof}
We estimate the norm at a fixed point $x\in X$. As the estimate will not depend on $x$, so it will hold uniformly on $X$. At $x$, the matrix $A_+-A_-$ can be diagonalized (with eigenvalues $\lambda_1,\ldots,\lambda_N$), and we shall write all matrices with respect to the basis of eigenvectors $e_1,\ldots,e_N$ of $A_+-A_-$. Let $H_1$ be the linear span of the eigenvectors $e_i$ such that $|\lambda_i|<\sqrt{m\varepsilon}$, $H_2=H_1^\perp$. In what follows, we write matrices as two-by-two matrices with respect to the decomposition $H_1\oplus H_2$. Then $A_+-A_-=C=\left(\begin{smallmatrix}a_1&0\\0&a_2\end{smallmatrix}\right)$, where $\|a_1\|<\sqrt{m\varepsilon}$ and all eigenvalues of $a_2$ satisfy $|\lambda_i|\geq\sqrt{m\varepsilon}$. Then it follows from $\|g(A_\pm)(A_+-A_-)\|<m\varepsilon$ that
$g(A_\pm)=\left(\begin{smallmatrix}a^\pm_{11}&a^\pm_{12}\\a^\pm_{21}&a^\pm_{22}\end{smallmatrix}\right)$ with the norms of $a^\pm_{12}$, $a^\pm_{21}$ and $a^\pm_{22}$ smaller than $\sqrt{m\varepsilon}$. So, 
$$
\|g(A_+)-g(A_-)\|<\|a^+_{11}-a^-_{11}\|+6\sqrt{m\varepsilon}.
$$ 

As
\begin{eqnarray*}
g(A_+)-g(A_-)&=&A_-+C-A_-^2-A_-C-CA_--C^2-A_-+A_-^2\\
&=&C-C^2-A_-C-CA_-,
\end{eqnarray*} 
so $a^+_{11}-a^-_{11}$ is the upper left corner of the right hand side, therefore,
$$
\|a^+_{11}-a^-_{11}\|\leq \|a_1\|+\|a_1^2\|+2\|A_-\|\cdot\|a_1\|<2(m+1)\sqrt{m\varepsilon},
$$
and we finally conclude that $\|g(A_+)-g(A_-)\|<2(m+3)\sqrt{m\varepsilon}$.

\end{proof}

Let $f$ and $\varphi$ be the functions on $\mathbb R$ given by 
$$
f(t)=\left\lbrace\begin{array}{cl}0,& \mbox{if\ }t\leq 0;\\
t,&\mbox{if\ }0\leq t\leq 1;\\1,&\mbox{if\ }t\geq 1\end{array}\right.
\quad \mbox{and} \quad 
\varphi(t)=\left\lbrace\begin{array}{cl}0,& \mbox{if\ }t\leq 0;\\t,&\mbox{if\ }t\geq 0.\end{array}\right.
$$ 
Set $B_\pm=f(A_\pm)$.

\begin{lem}\label{L5}
Let $a,b$ be selfadjoint matrices, $\|a\|\leq m$, $\|b\|\leq m$, $m\geq 1,$ $\|a-b\|<\delta$, $\delta\leq 1$. Then $\|\varphi(a)-\varphi(b)\|<2\ln 16m\sqrt{\delta}$.

\end{lem}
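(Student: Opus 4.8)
The plan is to reduce to the absolute value function and then exploit an integral representation. Since $\varphi(t)=\tfrac12(t+|t|)$, one has $\varphi(a)-\varphi(b)=\tfrac12(a-b)+\tfrac12(|a|-|b|)$, so that $\|\varphi(a)-\varphi(b)\|\le\tfrac12\delta+\tfrac12\,\||a|-|b|\|$, and it suffices to estimate $\||a|-|b|\|$. For this I would use the representation $|t|=\tfrac2\pi\int_0^\infty \frac{t^2}{t^2+s^2}\,ds=\tfrac2\pi\int_0^\infty\bigl(1-s^2(t^2+s^2)^{-1}\bigr)\,ds$. Substituting $a$ and $b$ and subtracting, the constant term cancels and the resolvent identity yields
\[
|a|-|b|=\frac2\pi\int_0^\infty s^2\,(b^2+s^2)^{-1}(a^2-b^2)(a^2+s^2)^{-1}\,ds .
\]

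The core of the argument is to estimate the integrand $I(s):=\|s^2(b^2+s^2)^{-1}(a^2-b^2)(a^2+s^2)^{-1}\|$ in two complementary regimes. A crude bound, using $0\le s^2(t^2+s^2)^{-1}\le1$, $\|(a^2+s^2)^{-1}\|\le s^{-2}$ and $\|a^2-b^2\|\le(\|a\|+\|b\|)\|a-b\|\le 2m\delta$, gives both $I(s)\le 2$ and $I(s)\le 2m\delta\,s^{-2}$. These two estimates alone only produce a bound of order $\sqrt{m\delta}$, which is too weak for large $m$. The decisive step is a refined bound for intermediate $s$: writing $a^2-b^2=a(a-b)+(a-b)b$, using $a=b+(a-b)$ (and symmetrically for $b$) to move the single extra factor next to a resolvent, and invoking the elementary inequality $\|x(x^2+s^2)^{-1}\|\le\tfrac1{2s}$ for selfadjoint $x$, one obtains $I(s)\le \tfrac{\delta}{s}+\tfrac{2\delta^2}{s^2}$ for every $s>0$. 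It is this $\delta/s$ behaviour that produces a logarithm in $m$ rather than a power of $m$, and I expect establishing it cleanly to be the main obstacle, since one must carefully track how the resolvent factors absorb the single power of $a$ or $b$.

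Finally I would split $\int_0^\infty I(s)\,ds$ at the cutoffs $s=\sqrt\delta$ and $s=m$: the crude bound $I\le2$ on $(0,\sqrt\delta)$ contributes the leading term $2\sqrt\delta$, the refined bound on $(\sqrt\delta,m)$ contributes $\delta\ln(m/\sqrt\delta)+O(\delta^{3/2})$, and the crude bound $I\le2m\delta\,s^{-2}$ on $(m,\infty)$ contributes $2\delta$. Collecting these terms and using $\delta\le1$, $m\ge1$ — in particular $\delta\le\sqrt\delta$ and the boundedness of $\sqrt\delta\,\ln(1/\delta)$ — the right-hand side is at most a constant multiple of $\sqrt\delta\,(\ln m+\mathrm{const})$. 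Multiplying by $\tfrac2\pi$ and feeding the result back into the reduction for $\varphi$ gives a bound of the form $C\sqrt\delta\,\ln(16m)$, which sits comfortably below the claimed $2\ln(16m)\sqrt\delta$; the remaining work is then only the routine bookkeeping of constants needed to confirm the stated inequality.
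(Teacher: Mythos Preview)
Your approach is correct and follows the same reduction as the paper: write $\varphi(t)=\tfrac12(t+|t|)$ and estimate $\||a|-|b|\|$. The only difference is that the paper does not re-derive the absolute-value estimate but simply quotes Kato's inequality
\[
\||a|-|b|\|<\frac{2}{\pi}\Bigl(2+\ln\frac{\|a\|+\|b\|}{\|a-b\|}\Bigr)\|a-b\|
\]
from \cite{Kato}, and then bounds this by $2\ln(16m)\sqrt{\delta}$ using $\delta\le 1$. Your integral-representation argument is essentially Kato's own proof of that inequality, so you are reproducing from scratch what the paper imports as a black box. The payoff of your route is self-containment (and you see exactly where the logarithm comes from); the payoff of the paper's route is that the whole lemma reduces to three lines once Kato's result is accepted.
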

\begin{proof}
It is shown in \cite{Kato} that $\||a|-|b|\|<\frac{2}{\pi}(2+\ln\frac{\|a\|+\|b\|}{\|a-b\|})\|a-b\|$. So, $\||a|-|b|\|<2\ln 16m\sqrt{\delta}$. Since $\varphi(a)=\frac{a+|a|}{2}$, we have
$$
\|\varphi(a)-\varphi(b)\|=\frac{1}{2}\|a+|a|-b-|b|\|<\frac{\delta}{2}+\ln 16m\sqrt{\delta}<2\ln 16m\sqrt{\delta}.
$$

\end{proof}

\begin{lem}\label{L6}
There is a computable constant $C(m)$ depending on $m$ such that 
$$
\|g(B_+)-g(B_-)\|<C(m)\sqrt[4]{\varepsilon}, \qquad \|h(B_+)-h(B_-)\|<C(m)\sqrt[4]{\varepsilon}.
$$

\end{lem}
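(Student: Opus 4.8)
The plan is to reduce everything to the two pointwise identities
\[ g(f(t))=\varphi(g(t)) \quad\text{and}\quad h(f(t))=t\,\varphi(g(t)) \qquad (t\in\mathbb R), \]
which are verified by inspection on the three intervals $t\le 0$, $0\le t\le 1$, $t\ge 1$: on $[0,1]$ one has $f(t)=t$ and $g(t)\ge 0$, while outside $[0,1]$ one has $f(t)\in\{0,1\}$ and $g(t)<0$, so that $\varphi(g(t))=0$. Applying the functional calculus of the selfadjoint matrices $A_\pm$, these identities become
\[ g(B_\pm)=\varphi(g(A_\pm)), \qquad h(B_\pm)=A_\pm\,\varphi(g(A_\pm)). \]
Thus both quantities to be estimated are expressed through $\varphi(g(A_\pm))$, i.e. through the positive part $g(A_\pm)_+$ of $g(A_\pm)$, which is exactly the object the earlier lemmas control.

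For the first inequality I would feed Lemma \ref{L4} into Lemma \ref{L5}. The matrices $a=g(A_+)$ and $b=g(A_-)$ are selfadjoint with $\|a\|,\|b\|\le m+m^2\le 2m^2$, and by Lemma \ref{L4} we have $\|a-b\|<2(m+3)\sqrt{m\varepsilon}=:\delta$, which is $\le 1$ once $\varepsilon$ is small. Lemma \ref{L5} (with $m$ replaced by $2m^2$) then gives
\[ \|g(B_+)-g(B_-)\|=\|\varphi(a)-\varphi(b)\|<2\ln(32m^2)\sqrt{\delta}=2\ln(32m^2)\sqrt{2(m+3)}\,m^{1/4}\,\varepsilon^{1/4}, \]
which is of the required form $C_1(m)\sqrt[4]{\varepsilon}$. (For $\varepsilon$ not small the statement is made trivial by enlarging $C(m)$, since $\|g(B_\pm)\|\le 1/4$.)

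For the second inequality I would write, with $\Phi_\pm:=\varphi(g(A_\pm))=g(B_\pm)$,
\[ h(B_+)-h(B_-)=A_+(\Phi_+-\Phi_-)+(A_+-A_-)\Phi_-. \]
The first summand is harmless, since $\|A_+(\Phi_+-\Phi_-)\|\le m\|g(B_+)-g(B_-)\|<mC_1(m)\sqrt[4]{\varepsilon}$ by the first part. The whole difficulty is therefore concentrated in the cross term $\|(A_+-A_-)\Phi_-\|=\|(A_+-A_-)g(A_-)_+\|$, and this is the step I expect to be the main obstacle: the naive estimate fails because $g(A_-)=g(A_-)_+-g(A_-)_-$ and the negative part $g(A_-)_-$, carried by the spectrum of $A_-$ outside $[0,1]$, is not controlled by Lemma \ref{lemma2}.

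To overcome this I would cut the spectrum of $A_-$ at a level $\rho$ to be optimized. Let $E$ be the spectral projection of $A_-$ onto $[\rho,1-\rho]$ and $E'=1-E$; both commute with $g(A_-)_+$. On the range of $E'$ the function $g(A_-)_+$ is bounded by $g(\rho)=\rho(1-\rho)$, so $\|(A_+-A_-)g(A_-)_+E'\|\le 2m\rho(1-\rho)$. On the range of $E$ the operator $g(A_-)$ is invertible with partial inverse $R$ (so $g(A_-)R=E$) satisfying $\|R\|\le(\rho(1-\rho))^{-1}$; the crucial trick is to keep $g(A_-)$ on the same side as $E$ and use the adjoint form $\|(A_+-A_-)g(A_-)\|<m\varepsilon$ of Lemma \ref{lemma2}, which yields
\[ \|(A_+-A_-)E\|=\|(A_+-A_-)g(A_-)R\|\le\|(A_+-A_-)g(A_-)\|\,\|R\|<\frac{m\varepsilon}{\rho(1-\rho)}. \]
Since $g(A_-)_+$ commutes with $E$ and $\|g(A_-)_+\|\le 1/4$, the middle part is then $\|(A_+-A_-)g(A_-)_+E\|=\|(A_+-A_-)E\,g(A_-)_+\|\le \frac{m\varepsilon}{4\rho(1-\rho)}$. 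Choosing $\rho\sim\sqrt{\varepsilon}$ balances the two contributions and gives $\|(A_+-A_-)g(A_-)_+\|=O(m\sqrt{\varepsilon})$, which is even smaller than $\sqrt[4]{\varepsilon}$. Combining, $\|h(B_+)-h(B_-)\|\le mC_1(m)\sqrt[4]{\varepsilon}+O(m\sqrt{\varepsilon})\le C_2(m)\sqrt[4]{\varepsilon}$, and taking $C(m)=\max(C_1(m),C_2(m))$ completes the proof.
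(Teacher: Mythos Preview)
Your argument is correct and follows the paper's strategy for the first inequality and for the decomposition $h(B_+)-h(B_-)=A_+(\Phi_+-\Phi_-)+(A_+-A_-)\Phi_-$. The difference lies in how you handle the cross term $\|(A_+-A_-)\Phi_-\|=\|(A_+-A_-)g(A_-)_+\|$, which you flag as the main obstacle and treat by a spectral cut-off of $A_-$ at level $\rho\sim\sqrt{\varepsilon}$.

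The paper disposes of this term in one line by the observation you narrowly missed: the positive part of a selfadjoint operator $S$ factors as $\varphi(S)=S\,P$, where $P$ is the spectral projection of $S$ onto $[0,\infty)$. Applied to $S=g(A_-)$, this gives $g(A_-)_+=g(A_-)P_-$ with $\|P_-\|\le 1$, whence
\[
\|(A_+-A_-)g(A_-)_+\|=\|(A_+-A_-)g(A_-)P_-\|\le\|(A_+-A_-)g(A_-)\|<m\varepsilon
\]
directly from the adjoint form of Lemma~\ref{lemma2}. Your spectral cutting reaches the same goal (and is perfectly valid), but yields only $O(m\sqrt{\varepsilon})$ and requires balancing two error terms; the paper's factorization gives $m\varepsilon$ with no work. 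Ironically, you already invoke the estimate $\|(A_+-A_-)g(A_-)\|<m\varepsilon$ inside your cut-off argument, so the ingredient was at hand. A minor side remark: you are right to apply Lemma~\ref{L5} with the bound $\|g(A_\pm)\|\le m+m^2$ rather than $m$; the paper glosses over this, but it only affects the logarithmic constant.
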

\begin{proof}
As $g(f(t))=\varphi(g(t))$, so
$g(B_\pm)=g(f(A_\pm))=\varphi(g(A_\pm))$. By Lemma \ref{L4}, $\|g(A_+)-g(A_-)\|<2(m+3)\sqrt{m\varepsilon}$. Applying Lemma \ref{L5} with $\delta=2(m+3)\sqrt{m\varepsilon}$, we get 
$$
\|g(B_+)-g(B_-)\|<2\ln 16m \sqrt{2(m+3)}\sqrt[4]{m\varepsilon}.
$$

Similarly, as 
$$
h(f(t))=f(t)g(f(t))=f(t)\varphi(g(t))=t\varphi(g(t)),
$$ 
so 
$$
h(B_\pm)=h(f(A_\pm))=f(A_\pm)g(f(A_\pm))=f(A_\pm)\varphi(g(A_\pm))=A_\pm\varphi(g(A_\pm))=A_\pm g(B_\pm). 
$$
Consider $g(A_\pm)$ written with respect to the basis of eigenvectors, and let $P_\pm$ be the projection onto the subspace spanned by the eigenvectors with non-negative eigenvalues. Then $\varphi(g(A_\pm))=P_\pm g(A_\pm)P_\pm=g(A_\pm)P_\pm$. Therefore, 
\begin{eqnarray*}
\|h(B_+)-h(B_-)\|&=&\|A_+g(B_+)-A_+g(B_-)+A_+g(B_-)-A_-g(B_-)\|\\
&\leq& m\|g(B_+)-g(B_-)\|+ \|(A_+ -A_-)g(B_-)\|\\
&=&m\|g(B_+)-g(B_-)\|+ \|(A_+ -A_-)g(A_-)P_-\|\\
&\leq&m\|g(B_+)-g(B_-)\|+ \|(A_+ -A_-)g(A_-)\|\\
&<&2m\ln 16m \sqrt{2(m+3)}\sqrt[4]{m\varepsilon}+m\varepsilon.
\end{eqnarray*}

\end{proof}

\begin{lem}\label{L7}
$\|(B_\pm-B_\pm^2)(B_+-B_-)\|<2C(m)\sqrt[4]{\varepsilon}$.

\end{lem}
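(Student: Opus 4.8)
The plan is to reduce the statement entirely to the two estimates already established in Lemma \ref{L6}, using only functional calculus identities and the bound $\|B_\pm\|\le 1$. Since $f$ takes values in $[0,1]$ and $A_\pm$ is selfadjoint, the spectrum of $B_\pm=f(A_\pm)$ lies in $[0,1]$, so $\|B_\pm\|\le 1$ and $B_\pm$ is selfadjoint. The key algebraic observations are that $B_\pm-B_\pm^2=g(B_\pm)$ (since $g(t)=t-t^2$) and that, because $g(B_\pm)$ and $h(B_\pm)$ are polynomials in $B_\pm$, they commute with $B_\pm$ and satisfy $B_\pm g(B_\pm)=g(B_\pm)B_\pm=h(B_\pm)$ (since $h(t)=tg(t)$). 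Thus the quantity to be estimated is simply $g(B_\pm)(B_+-B_-)$, and the task is to rewrite it so that the noncommuting product $g(B_+)B_-$ (resp.\ $g(B_-)B_+$) is replaced by the controlled differences $g(B_+)-g(B_-)$ and $h(B_+)-h(B_-)$.

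For the ``$+$'' case I would compute, telescoping through $h(B_-)=g(B_-)B_-$,
\begin{align*}
(B_+-B_+^2)(B_+-B_-)&=g(B_+)B_+-g(B_+)B_-=h(B_+)-g(B_+)B_-\\
&=\bigl(h(B_+)-h(B_-)\bigr)+\bigl(g(B_-)-g(B_+)\bigr)B_-.
\end{align*}
Taking norms and applying the triangle inequality, together with $\|B_-\|\le 1$ and the two bounds $\|h(B_+)-h(B_-)\|<C(m)\sqrt[4]{\varepsilon}$, $\|g(B_+)-g(B_-)\|<C(m)\sqrt[4]{\varepsilon}$ from Lemma \ref{L6}, yields
$$
\|(B_+-B_+^2)(B_+-B_-)\|<C(m)\sqrt[4]{\varepsilon}+C(m)\sqrt[4]{\varepsilon}=2C(m)\sqrt[4]{\varepsilon}.
$$

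The ``$-$'' case is entirely symmetric: I would write $(B_--B_-^2)(B_+-B_-)=g(B_-)B_+-h(B_-)=\bigl(h(B_+)-h(B_-)\bigr)+\bigl(g(B_-)-g(B_+)\bigr)B_+$, now telescoping through $h(B_+)=g(B_+)B_+$, and bound it the same way using $\|B_+\|\le 1$. There is no genuine obstacle here; the only point requiring care is the operator ordering, since $g(B_+)$ and $B_-$ do not commute. This is precisely why the telescoping insertion of $h(B_-)$ (respectively $h(B_+)$) is chosen so that each surviving product pairs a function of $B_\pm$ with the \emph{same} $B_\pm$, converting it into $h(B_\pm)$ and isolating the two differences that Lemma \ref{L6} already controls.
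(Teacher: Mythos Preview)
Your proof is correct and follows essentially the same route as the paper: both add and subtract $h(B_\mp)=g(B_\mp)B_\mp$ to split $g(B_\pm)(B_+-B_-)$ into the two differences $h(B_+)-h(B_-)$ and $(g(B_-)-g(B_+))B_\mp$, and then invoke Lemma~\ref{L6} together with $\|B_\mp\|\le 1$. The only difference is cosmetic: you write the telescoping as an exact identity before taking norms, whereas the paper goes straight to the triangle inequality.
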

\begin{proof}
\begin{eqnarray*}
\|(g(B_+))(B_+-B_-)\|&\leq&\|g(B_+)B_+-g(B_-)B_-\|+\|g(B_-)B_--g(B_+)B_-\|\\
&\leq&\|h(B_+)-h(B_-)\|+\|g(B_+)-g(B_-)\|\cdot\|B_-\|<2C(m)\sqrt[4]{\varepsilon}.
\end{eqnarray*}
A similar estimate holds for $g(B_-)(B_+-B_-)$.

\end{proof}

Let $\kappa(t)=\sqrt{t-t^2}$, $B_\pm\in C(X, M_N)$ matrix-valued functions such that 
\begin{enumerate}
\item
$0\leq B_\pm\leq 1$;
\item
$\|(B_\pm-B_\pm^2)(B_+-B_-)\|<\delta$;
\item
$\|B_+-B_+^2-B_-+B_-^2\|<\delta$
\end{enumerate}
for some $0<\delta<1$.
Set $Q=\left(\begin{matrix}1-B_+&\kappa(B_+)\\\kappa(B_+)&B_-\end{matrix}\right)$, $Q\in C(X,M_{2N})$. 

\begin{lem}\label{L8}
For $Q$ defined above, one has $\|Q-Q^2\|<3\sqrt{\delta}$.

\end{lem}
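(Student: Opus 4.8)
The plan is to compute $Q^2$ directly and read off $Q-Q^2$ as a $2\times2$ block matrix over the orthogonal sum $\mathbb{C}^N\oplus\mathbb{C}^N$, then estimate each block separately. Write $a=1-B_+$, $c=\kappa(B_+)$ and $d=B_-$, so that $c^2=B_+-B_+^2$ and, crucially, $c$ commutes with $B_+$ since both are functions of $B_+$. Squaring the block matrix gives
$$
Q-Q^2=\begin{pmatrix} a-a^2-c^2 & c-ac-cd\\ c-ca-dc & d-c^2-d^2\end{pmatrix}.
$$

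First I would dispose of the two diagonal blocks. The upper-left block is
$(1-B_+)-(1-B_+)^2-(B_+-B_+^2)=(1-B_+)B_+-(B_+-B_+^2)=0$,
so it contributes nothing. The lower-right block equals
$d-c^2-d^2=(B_--B_-^2)-(B_+-B_+^2)$,
whose norm is exactly the quantity controlled by hypothesis (3), hence is smaller than $\delta$.

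For the off-diagonal blocks I would use the commutation $cB_+=B_+c$. The upper-right block simplifies as
$c-ac-cd=c-(1-B_+)c-cB_-=B_+c-cB_-=c(B_+-B_-)=\kappa(B_+)(B_+-B_-)$,
and a symmetric computation shows the lower-left block is its adjoint $(B_+-B_-)\kappa(B_+)$. The key step, and the only place where hypothesis (2) enters, is to bound $X:=\kappa(B_+)(B_+-B_-)$. The difficulty is that the hypotheses control $B_+-B_+^2=\kappa(B_+)^2$ but not $\kappa(B_+)$ itself, so I would square using the $C^*$-identity to convert the square root back into a known quantity: since $B_+-B_-$ is selfadjoint,
$$
\|X\|^2=\|X^*X\|=\|(B_+-B_-)\kappa(B_+)^2(B_+-B_-)\|=\|(B_+-B_-)(B_+-B_+^2)(B_+-B_-)\|.
$$
Bounding the outer factor by $\|B_+-B_-\|\le2$ (which follows from $0\le B_\pm\le1$) and the remaining factor $(B_+-B_+^2)(B_+-B_-)$ by hypothesis (2), I obtain $\|X\|^2<2\delta$, i.e. $\|X\|<\sqrt{2\delta}$, and the lower-left block satisfies the same bound.

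Finally I would assemble the pieces with the standard block-matrix estimate
$$
\left\|\begin{pmatrix}A&B\\C&D\end{pmatrix}\right\|\le\max(\|A\|,\|D\|)+\max(\|B\|,\|C\|),
$$
obtained by splitting the matrix into its block-diagonal and block-antidiagonal parts. With $A=0$, $\|D\|<\delta$ and $\|B\|=\|C\|<\sqrt{2\delta}$, this yields $\|Q-Q^2\|<\delta+\sqrt{2\delta}$. Since $0<\delta<1$ forces $\delta<\sqrt{\delta}$ and $\sqrt{2\delta}=\sqrt2\,\sqrt\delta$, I conclude $\|Q-Q^2\|<(1+\sqrt2)\sqrt\delta<3\sqrt\delta$. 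The only genuinely delicate point is the square-root manipulation via the $C^*$-identity in the key step; everything else reduces to the algebraic observation that the upper-left block vanishes identically and that $\kappa(B_+)$ commutes with $B_+$.
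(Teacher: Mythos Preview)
Your proof is correct and follows essentially the same route as the paper: compute the four blocks of $Q-Q^2$, observe the $(1,1)$-block vanishes, bound the $(2,2)$-block by hypothesis (3), and control the off-diagonal block $\kappa(B_+)(B_+-B_-)$ by squaring via the $C^*$-identity and invoking hypothesis (2) together with $\|B_+-B_-\|\le 2$. The only cosmetic difference is that you spell out the final assembly via the diagonal/antidiagonal split, obtaining the slightly sharper $(1+\sqrt2)\sqrt\delta$, whereas the paper simply records the conclusion $\|Q-Q^2\|<3\sqrt\delta$.
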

\begin{proof}
Let us estimate the norms of matrix entries of $Q-Q^2$.
$$
(Q-Q^2)_{11}=1-B_+-(1-B_+)^2-\kappa(B_+)^2=B_+-B_+^2-g(B_+)=0.
$$
$$
(Q-Q^2)_{12}=(Q-Q^2)^*_{21}=\kappa(B_+)-\kappa(B_+)(1-B_++B_-)=\kappa(B_+)(B_+-B_-). 
$$
So, using the $C^*$-property, we have
\begin{eqnarray*}
\|(Q-Q^2)_{12}\|^2&=&\|\kappa(B_+)(B_+-B_-)\|^2=\|(B_+-B_-)\kappa(B_+)^2(B_+-B_-)\|\\
&\leq& \|B_+-B_-\|\cdot\|g(B_+)(B_+-B_-)\|<2\delta,
\end{eqnarray*}
hence $\|(Q-Q^2)_{12}\|<\sqrt{2\delta}$.
$$
(Q-Q^2)_{22}=B_--B_-^2-\kappa(B_+)^2=g(B_-)-g(B_+),
$$
hence $\|(Q-Q^2)_{22}\|<\delta$. Then $\|Q-Q^2\|<3\sqrt{\delta}$.

\end{proof}

Recall that $Q$ is an almost projection if $\|Q-Q^2\|<\frac{1}{4}$. In this case there exists a projection $P$ with $\|Q-P\|<\frac{1}{2}$, which determines a vector bundle over $X$. If $P'$ is another projection with $\|Q-P'\|<\frac{1}{2}$ then $\|P'-P\|<1$, hence $P$ and $P'$ determine isomorphic vector bundles.

Thus we have the following statement.
\begin{thm}\label{thm1}
For sufficiently small $\varepsilon>0$, the almost projection $Q$ defined from an $\varepsilon$-generalized pair of cocycles determines a vector bundle over $X$ up to isomorphism.

\end{thm}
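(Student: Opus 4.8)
The plan is to assemble the preceding lemmas into the three hypotheses of Lemma~\ref{L8}, apply that lemma to control $\|Q-Q^2\|$, and then choose $\varepsilon$ small enough that $Q$ becomes a genuine almost projection, so that the remark recalled just before the statement applies.

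First I would verify that the matrix-valued functions $B_\pm=f(A_\pm)$ satisfy conditions (1)--(3) preceding Lemma~\ref{L8}. Condition (1), namely $0\le B_\pm\le 1$, is immediate from the functional calculus: $A_\pm$ is selfadjoint by Lemma~\ref{lemma2}, and $f$ takes values in $[0,1]$, so the spectrum of $B_\pm$ lies in $[0,1]$. Condition (2) is exactly Lemma~\ref{L7}, which gives the bound $2C(m)\sqrt[4]{\varepsilon}$. Condition (3) reads $\|g(B_+)-g(B_-)\|<\delta$ once one rewrites $B_\pm-B_\pm^2=g(B_\pm)$, and this is the first estimate of Lemma~\ref{L6}, bounded by $C(m)\sqrt[4]{\varepsilon}$. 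Taking $\delta=2C(m)\sqrt[4]{\varepsilon}$ makes all three conditions hold simultaneously, provided $\varepsilon$ is small enough to force $\delta<1$, that is, $\varepsilon<\bigl(2C(m)\bigr)^{-4}$.

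Next I would apply Lemma~\ref{L8} with this $\delta$, obtaining $\|Q-Q^2\|<3\sqrt{\delta}=3\sqrt{2C(m)}\,\varepsilon^{1/8}$. The right-hand side tends to $0$ as $\varepsilon\to 0$, so there is a threshold $\varepsilon_0(m)$ below which $\|Q-Q^2\|<\tfrac14$; explicitly one may take any $\varepsilon$ with $3\sqrt{2C(m)}\,\varepsilon^{1/8}<\tfrac14$. Throughout, $Q$ is a continuous $M_{2N}$-valued function on $X$: the entries of $A_\pm$ are continuous because $\varphi_\alpha\varphi_\beta$ is supported in $U_\alpha\cap U_\beta$ and hence the products $\varphi_\alpha\varphi_\beta g^\pm_{\alpha\beta}$ extend by zero across the points where a cocycle is undefined, while $f$ is continuous and $\kappa$ is continuous on $[0,1]$, where $B_+$ takes its spectrum by condition (1).

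Finally, once $Q$ is an almost projection I would invoke the standard fact recalled before the statement: there is a projection $P\in C(X,M_{2N})$ with $\|Q-P\|<\tfrac12$, and the range of $P$ is a vector bundle over $X$; any two such projections differ by less than $1$ in norm and therefore define isomorphic bundles, so the bundle is well defined up to isomorphism. The main difficulty here is conceptual rather than computational: essentially all the analytic work---propagating the near-cocycle relations of (\ref{1}) through the functional calculus while the error degrades successively from $\varepsilon$ to $\sqrt{m\varepsilon}$, then $\sqrt[4]{\varepsilon}$, and finally $\varepsilon^{1/8}$---has already been absorbed into Lemmas~\ref{L4}--\ref{L8}. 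What remains is to track these error exponents carefully and to pin down the explicit threshold $\varepsilon_0(m)$ at which $\|Q-Q^2\|$ drops below $\tfrac14$.
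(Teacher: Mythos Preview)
Your assembly of Lemmas~\ref{lemma2}--\ref{L8} is correct and matches the paper's intent: the paper itself says ``almost everything is already proved above,'' and your tracking of the error degradation from $\varepsilon$ through $\varepsilon^{1/8}$ is exactly how those lemmas are meant to be strung together.

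There is one point you overlook, however, and it is precisely the one point the paper singles out as the remaining claim: independence of the resulting vector bundle from the choice of partition of unity $\{\varphi_\alpha^2\}$. The matrices $A_\pm$, hence $B_\pm$ and $Q$, depend on this choice, so to say that the $\varepsilon$-generalized pair of cocycles \emph{determines} a bundle up to isomorphism one must check that two different partitions of unity yield almost projections $Q$ and $Q'$ whose associated genuine projections are close enough (in norm less than $1$) to give isomorphic bundles. The paper dismisses this as obvious---and indeed it follows by the usual convex interpolation $\varphi_\alpha(s)^2=(1-s)\varphi_\alpha^2+s(\varphi_\alpha')^2$ between two partitions, which keeps all the estimates uniform in $s$ and hence connects $Q$ to $Q'$ through a path of almost projections---but it should at least be named. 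Your final paragraph identifies the remaining work as pinning down the threshold $\varepsilon_0(m)$; the paper instead identifies it as this independence-of-choices check.
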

\begin{proof}
Almost everything is already proved above. The only remaining claim to be proved is independence of the vector bundle (or, equivalently, that of an almost projection $Q$) from the choice of the partition of unity, which is obvious.

\end{proof}
Let us denote the vector bundle from Theorem \ref{thm1} by $\xi(g^+,g^-)$. 

\subsection{Case of a pair of $\varepsilon$-almost cocycles}
Let us consider now the case when an $\varepsilon$-generalized pair of cocycles is a pair of $\varepsilon$-almost cocycles, namely if the maps $g^\pm_{\alpha\beta}$ satisfy the condition
\begin{equation}\label{ac}
\|g^\pm_{\alpha\beta}g^\pm_{\beta\gamma}-g^\pm_{\alpha\gamma}\|<\varepsilon.
\end{equation}
It is obvious that if $g^+_{\alpha\beta}$ and $g^-_{\alpha\beta}$ are $\varepsilon$-almost cocycles then the pair $(g^+_{\alpha\beta},g^-_{\alpha\beta})$ is a $2\varepsilon$-generalized pair of cocycles.

If the two families of maps $\{g^\pm_{\alpha\beta}\}$ are  $\varepsilon$-almost cocycles, i.e. satisfy (\ref{ac}) then the matrix-valued functions $A_\pm$ are almost projections, namely, as in the proof of Lemma \ref{lemma2}, one can check that $\|A_\pm-A_\pm^2\|<m\varepsilon$. Thus, if $m\varepsilon<\frac{1}{4}$ then $A_+$ and $A_-$ determine two vector bundles over $X$ up to an isomorphism. Let us denote these vector bundles by $\eta(g^+)$ and $\eta(g^-)$, respectively. We also have $\|A_\pm-B_\pm\|<m\varepsilon$. As $\|\kappa(B_+)\|=\|B_+-B_+^2\|^{1/2}<2\sqrt{m\varepsilon}$, so 
$$
\left\|Q-\left(\begin{matrix}1-A_+&0\\0&A_-\end{matrix}\right)\right\|<4\sqrt{m\varepsilon},
$$
hence the following claim holds true. 
\begin{lem}
For a pair of $\varepsilon$-almost cocycles $g^+$, $g^-$ and for any sufficiently small $\varepsilon>0$, we have $[\xi(g^+,g^-)]=[1]-[\eta(g^+)]+[\eta(g^-)]$ in $K^0(X)$. 

\end{lem}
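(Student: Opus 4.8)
The plan is to leverage the estimate displayed just above the statement, namely that $Q$ lies within $4\sqrt{m\varepsilon}$ of the block-diagonal matrix $D=\left(\begin{smallmatrix}1-A_+&0\\0&A_-\end{smallmatrix}\right)$, and to promote this metric closeness to an equality of $K$-theory classes. First I would verify that $D$ is an almost projection of the same quality as $A_\pm$: since $(1-A_+)-(1-A_+)^2=A_+-A_+^2$, one has $D-D^2=\left(\begin{smallmatrix}A_+-A_+^2&0\\0&A_--A_-^2\end{smallmatrix}\right)$, whence $\|D-D^2\|<m\varepsilon$. For $m\varepsilon<\frac14$ this makes $D$ an almost projection, so it carries a class $[D]\in K^0(X)$, and because $D$ is block diagonal this class splits as $[D]=[1-A_+]+[A_-]$.

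Next I would identify the two summands with the bundles in the statement. By the very definition recalled above, $A_-$ determines $\eta(g^-)$, so $[A_-]=[\eta(g^-)]$. For the other block, $A_+$ is an almost projection lying near the genuine projection $P_+$ that determines $\eta(g^+)$; hence $1-A_+$ lies near the projection $1-P_+$, and $[1-A_+]=[1-P_+]=[1]-[\eta(g^+)]$, where $[1]$ is the class of the trivial bundle $1_{mN}$. Combining, $[D]=[1]-[\eta(g^+)]+[\eta(g^-)]$.

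It then remains to show $[\xi(g^+,g^-)]=[Q]=[D]$. For this I would run the straight-line homotopy $T_t=(1-t)Q+tD$, $t\in[0,1]$, between the two almost projections. A short algebraic identity gives $T_t-T_t^2=(1-t)(Q-Q^2)+t(D-D^2)+t(1-t)(Q-D)^2$, so that $\|T_t-T_t^2\|\leq\max(\|Q-Q^2\|,\|D-D^2\|)+\frac14\|Q-D\|^2$. Using Lemma \ref{L8} for the first term, $\|D-D^2\|<m\varepsilon$ for the second, and $\|Q-D\|<4\sqrt{m\varepsilon}$ for the third, every summand tends to $0$ with $\varepsilon$; thus for $\varepsilon$ small enough the whole path stays inside the almost-projection region $\|T-T^2\|<\frac14$. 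Since the $K$-class attached to an almost projection is invariant along such a path, $[Q]=[D]$, and the identity of the lemma follows.

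The block computations are routine; the single point that really uses the hypothesis that $\varepsilon$ be small — and which I view as the crux — is keeping the interpolating family $T_t$ uniformly within $\|T-T^2\|<\frac14$, so that homotopy invariance of the class may be invoked. This is precisely what the fourth-root estimate $\|Q-D\|<4\sqrt{m\varepsilon}$ together with the bound of Lemma \ref{L8} guarantee.
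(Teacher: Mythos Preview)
Your argument is correct and follows the paper's line exactly: the paper simply records the estimate $\bigl\|Q-\left(\begin{smallmatrix}1-A_+&0\\0&A_-\end{smallmatrix}\right)\bigr\|<4\sqrt{m\varepsilon}$ and declares the lemma to follow, while you have spelled out the routine passage from this closeness to equality of $K$-classes via the linear homotopy of almost projections. One small slip of language: you call $\|Q-D\|<4\sqrt{m\varepsilon}$ a ``fourth-root estimate'', but it is a square-root bound; the fourth roots appear only in the general (non--almost-cocycle) setting of Lemmas~\ref{L6}--\ref{L8}, which you do not actually need here since $\|Q-Q^2\|$ can be bounded directly from $\|B_\pm-B_\pm^2\|<m\varepsilon$ in this special case.
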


\subsection{Generalized pairs of representations}

Let $G$ be a discrete countable group and let $\tilde{X}\to X$ be a principal $G$-bundle
over a nice (e.g. compact metric) space $X$. 
Let $G$ act on $l^2(G)$ by (left) multiplication by viewing $G$ as a set of units of $l^2(G)$ and
let $l_X$ be the “line bundle” ˜$\tilde{X}\times_G l^2(G)\to X$
induced by the diagonal action of $G$. One can cover $X$ by a finite family of open
sets $(U_\alpha)_{\alpha\in\Lambda}$ in such a way that the bundle $l_X$ is obtained by gluing the trivial
bundles $U_\alpha\times l^2(G)$ via a constant cocycle $\gamma_{\alpha\beta}\in G \subset l^2(G)$.

The finitely generated projective right Banach $C(X)\otimes l^2(G)$-module of continuous sections of $l_X$ is represented by
an idempotent
$$
P=(\varphi_\alpha\varphi_\beta\cdot \gamma_{\alpha\beta})_{\alpha,\beta}\in M_m(C(X))\otimes l^2(G).	
$$

Let $\pi:G\to \mathrm U(N)$ be a map from $G$ to the unitary group of a finitedimensional Hermitian space. Set 
$$
P_\pi=(\varphi_\alpha\varphi_\beta\cdot \pi(\gamma_{\alpha\beta}))_{\alpha,\beta}\in M_{m\times N}(C(X)).
$$
If $\pi$ is a representation then $P_\pi$ is obviously a projection. If $\pi$ is an almost representation then $P_\pi$ is an almost projection, i.e. $P_\pi-P_\pi^2$ is small, which means that $P_\pi$ still determines a vector bundle over $X$ (but only up to isomorphism). Namely, there exists a (non-unique) projection $Q$ close to $P_\pi$, which determines a vector bundle. In \cite{Mishchenko}, in the case when $X$ is a finite simplicial complex, the transition functions are constructed for the vector bundle determined by such projection $Q$, thus giving an explicit form for $Q$, which otherwise can be constructed only implicitly, as a result of applying to $P_\pi$ a continuous function equal to 0 and to 1 in some neighborhoods of 0 and of 1 respectively.

For a finite subset $F\subset G$ and for $\varepsilon>0$, we call two maps $\pi_\pm:G\to \mathrm U(N)$ {\it a pair of $(F,\varepsilon)$-generalized representations} if $\pi_\pm(g^{-1})=\pi_\pm(g)^{-1}$ for any $g\in F$ and if
\begin{equation}\label{e-rep}
\|(\pi_\pm(gh)-\pi_\pm(g)\pi_\pm(h))(\pi_+(\gamma)-\pi_-(\gamma))\|<\varepsilon\nonumber
\end{equation} 
for any $g,h,\gamma\in F$.

\begin{lem}
If $F$ is sufficiently great and if $(\pi_+,\pi_-)$ is a pair of $(F,\varepsilon)$-generalized representations then $(\pi_+(\gamma_{\alpha\beta}),\pi_-(\gamma_{\alpha\beta}))$ is an $\varepsilon$-generalized pair of cocycles, where $\gamma_{\alpha\beta}$ is the constant $G$-valued cocycle as above.

\end{lem}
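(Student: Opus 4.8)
The plan is to reduce the defining inequalities of an $\varepsilon$-generalized pair of cocycles directly to the defining inequalities of a pair of $(F,\varepsilon)$-generalized representations, exploiting the fact that the transition functions $\gamma_{\alpha\beta}$ form a genuine (constant) $G$-valued cocycle. First I would pin down the meaning of ``$F$ sufficiently great'': since $\Lambda$ is finite, the set of values $\{\gamma_{\alpha\beta}:\alpha,\beta\in\Lambda\}$ is a finite subset of $G$, and I take $F$ to contain it. Because the $\gamma_{\alpha\beta}$ form an honest cocycle, they satisfy $\gamma_{\alpha\beta}\gamma_{\beta\gamma}=\gamma_{\alpha\gamma}$ and $\gamma_{\beta\alpha}=\gamma_{\alpha\beta}^{-1}$ as identities in $G$ (the cocycle being constant in $x$), so these algebraic relations are available throughout.

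Writing $g^\pm_{\alpha\beta}:=\pi_\pm(\gamma_{\alpha\beta})$, the first thing to check is the standing symmetry requirement $g^\pm_{\alpha\beta}=(g^\pm_{\beta\alpha})^{-1}$. This follows from $\gamma_{\beta\alpha}=\gamma_{\alpha\beta}^{-1}$ together with the inverse-preserving property $\pi_\pm(\gamma^{-1})=\pi_\pm(\gamma)^{-1}$ guaranteed, for $\gamma\in F$, by the definition of a pair of generalized representations. For the main estimate I would substitute into the left-hand side of (\ref{1}): using the cocycle identity $\gamma_{\alpha\gamma}=\gamma_{\alpha\beta}\gamma_{\beta\gamma}$ one rewrites
$$
g^\pm_{\alpha\beta}g^\pm_{\beta\gamma}-g^\pm_{\alpha\gamma}=\pi_\pm(\gamma_{\alpha\beta})\pi_\pm(\gamma_{\beta\gamma})-\pi_\pm(\gamma_{\alpha\beta}\gamma_{\beta\gamma})=-\bigl(\pi_\pm(gh)-\pi_\pm(g)\pi_\pm(h)\bigr)
$$
with $g=\gamma_{\alpha\beta}$, $h=\gamma_{\beta\gamma}$, while $g^+_{\gamma\delta}-g^-_{\gamma\delta}=\pi_+(\gamma_{\gamma\delta})-\pi_-(\gamma_{\gamma\delta})$ with $\gamma=\gamma_{\gamma\delta}$. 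Hence the left-hand side of (\ref{1}) equals $\|(\pi_\pm(gh)-\pi_\pm(g)\pi_\pm(h))(\pi_+(\gamma)-\pi_-(\gamma))\|$, which is $<\varepsilon$ by the generalized-representation inequality, since $g,h,\gamma\in F$ by the choice of $F$.

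There is essentially no analytic obstacle: the entire content is the observation that the constant transition cocycle is genuine, so that the ``defect'' term $g^\pm_{\alpha\beta}g^\pm_{\beta\gamma}-g^\pm_{\alpha\gamma}$ collapses to the representation defect $\pi_\pm(gh)-\pi_\pm(g)\pi_\pm(h)$ with no error contributed by the gluing data. The only point requiring care is the bookkeeping: one must ensure that all three group elements appearing in each instance of (\ref{1}), namely $\gamma_{\alpha\beta}$, $\gamma_{\beta\gamma}$ and $\gamma_{\gamma\delta}$, lie in $F$, which is exactly what ``$F$ sufficiently great'' secures. As there are only finitely many such elements this is automatic once $F\supseteq\{\gamma_{\alpha\beta}\}$, and no uniformity issue in $x$ arises because the cocycle is constant.
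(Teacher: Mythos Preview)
Your argument is correct and is exactly the unpacking of what the paper leaves implicit: the paper's own proof is the single word ``Obvious,'' and you have written out precisely why---the genuine cocycle identity $\gamma_{\alpha\beta}\gamma_{\beta\gamma}=\gamma_{\alpha\gamma}$ collapses the cocycle defect to the representation defect, and the choice $F\supseteq\{\gamma_{\alpha\beta}\}$ does the rest.
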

\begin{proof}
Obvious.
\end{proof}

\subsection{Concluding Remarks}
Thus we have a construction of vector bundles from pairs of generalized representations. Note that if we start with (pairs of) genuine representations then we get locally flat vector bundles, which are not too interesting from the $K$-theoretic point of view. If we start with pairs of almost representations then we get the so-called almost flat vector bundles. In many cases, any element of $K^0(X)$ can be represented by an almost flat vector bundle, and this plays an important role in the study of the Novikov conjecture. We cannot evaluate now if our construction gives almost flat vector bundles in the general case. Positive answer would show that pairs of generalized representations are in some sense equivalent to the pairs of almost representations, and negative answer would give some information about the Novikov conjecture when $K^0(BG)$ has non-almost flat elements.

\end{document}